\documentclass[12pt,reqno]{article}

\usepackage[usenames]{color}
\usepackage{amssymb}
\usepackage{amsmath}
\usepackage{amsthm}
\usepackage{amsfonts}
\usepackage{amscd}
\usepackage{graphicx}

\usepackage[colorlinks=true,
linkcolor=webgreen,
filecolor=webbrown,
citecolor=webgreen]{hyperref}

\definecolor{webgreen}{rgb}{0,.5,0}
\definecolor{webbrown}{rgb}{.6,0,0}

\usepackage{color}
\usepackage{fullpage}
\usepackage{float}

\usepackage{graphics}
\usepackage{latexsym}
\usepackage{epsf}
\usepackage{breakurl}

\def\suchthat{\, : \,}
\def\modd#1 #2{#1\ \mbox{\rm (mod}\ #2\mbox{\rm )}}
\DeclareMathOperator{\Pal}{Pal}
\def\Palx{\Pal_{\bf x}}
\DeclareMathOperator{\Fac}{Fac}
\def\rhox{\rho_{\bf x}}

\begin{document}

\title{Palindrome complexity versus factor complexity}

\author{Jeffrey Shallit\\
School of Computer Science\\
University of Waterloo\\
Waterloo, ON  N2L 3G1 \\
Canada\\
\href{mailto:shallit@uwaterloo.ca}{\tt shallit@uwaterloo.ca}}

\maketitle

\theoremstyle{plain}
\newtheorem{theorem}{Theorem}
\newtheorem{corollary}[theorem]{Corollary}
\newtheorem{lemma}[theorem]{Lemma}
\newtheorem{proposition}[theorem]{Proposition}
\newtheorem{claim}{Claim}

\theoremstyle{definition}
\newtheorem{definition}[theorem]{Definition}
\newtheorem{example}[theorem]{Example}
\newtheorem{conjecture}[theorem]{Conjecture}

\theoremstyle{remark}
\newtheorem{remark}[theorem]{Remark}

\def\Enn{\mathbb{N}}

\begin{abstract}
Let ${\bf x} = (a_i)_{i \geq 0}$ be an infinite word over a finite alphabet
$\Sigma$.  Let $\rho (n)$ be the factor complexity function for $\bf x$ and
$\Pal(n)$ be the palindrome complexity function for $\bf x$.
We give a new relationship between these two quantities; namely,
if $\bf x$ is not ultimately periodic, then
$$
\lim_{n \rightarrow \infty} 
{{\Pal (n) \log (\Pal (n) + 1)} \over {\rho (n)}} = 0.
$$  
Furthermore, we prove that the numerator in this result
is essentially optimal.
\end{abstract}

\section{Introduction}
Let ${\bf x} = (a_i)_{i \geq 0}$ be an infinite word over a finite alphabet
$\Sigma$.  The {\it factor complexity\/} (aka {\it subword complexity}) of
$\bf x$ is defined to be the function $\rho_{\bf x} (n)$
mapping $n$ to the number of distinct length-$n$ blocks
appearing in $\bf x$.  The {\it palindrome complexity\/} $\Pal_{\bf x} (n)$
analogously counts the number of distinct length-$n$ palindromes
appearing in $\bf x$.  

Both factor complexity and palindrome complexity have been extensively
studied by researchers in combinatorics on words.  
For factor
complexity, see, for example, \cite{Morse&Hedlund:1938} and the
surveys \cite{Allouche:1994a,Cassaigne&Nicolas:2010}.  Many classes, such as the
Sturmian words, are known to have linear factor complexity
\cite{Morse&Hedlund:1940}.   In the other direction,
Grillenberger \cite{Grillenberger:1973}
constructed words with large factor complexity.
For palindrome complexity, see, for example,
\cite{Droubay&Justin&Pirillo:2001,Allouche&Baake&Cassaigne&Damanik:2003}.

Let us look at two examples.
\begin{example}
Let ${\bf t} = 01101001\cdots$ be the famous Thue-Morse sequence,
the fixed point of the morphism $0 \rightarrow 01$, $1 \rightarrow 10$.
Then it is known that 
$$\Pal_{\bf t} (n) = \begin{cases}
2, & \text{if $n\in \{1,2,3\}$, or $n$ even and $3 \cdot 4^k < n \leq 4^{k+1}$ for $k \geq 0$;}\\
4, & \text{if $n$ even, $4^k < n \leq 3 \cdot 4^k$ for $k \geq 1$;} \\
0, & \text{otherwise.}
\end{cases}
$$
See \cite[Eq.~(4)]{Blondin-Masse&Brlek&Garon&Labbe:2008}.

It is also known that
$$ 3 = \liminf_{n \rightarrow \infty} \rho_{\bf t} (n)/n <
\limsup_{n \rightarrow \infty} \rho_{\bf t} (n)/n = 10/3.$$
See \cite{Brlek:1989,Luca&Varricchio:1989a,Avgustinovich:1994}.
Therefore $\lim_{n \rightarrow \infty} \Pal_{\bf t} (n)/\rho_{\bf t}(n) = 0$.
\end{example}

\begin{example}
Let ${\bf b} = 11011100101110111\cdots$ be 
the word consisting of the concatenation of the base-$2$ representations
of $n = 1,2,3, \ldots$.  Then clearly
$\rho_{\bf b} (n) = 2^n$, while
$\Pal_{\bf b} (n) = \Theta(2^{n/2})$.
Therefore $\lim_{n \rightarrow \infty} \Pal_{\bf b} (n)/\rho_{\bf b}(n) = 0$.
\end{example}

Examining these two very different examples, at extremes of
complexity, suggests the following:
\begin{conjecture}
If $\bf x$ is not ultimately periodic, then
$$\lim_{n \rightarrow \infty} {{\Pal_{\bf x} (n)} \over {\rho_{\bf x} (n)}} =
0.$$
\end{conjecture}
In June 2024, Jean-Paul Allouche asked me if this was true.
In fact, even more is true.  We will prove
\begin{theorem}
If $\bf x$ is not ultimately periodic, then
$$\lim_{n \rightarrow \infty} {{\Pal_{\bf x} (n) \log (\Pal_{\bf x} (n) + 1)} \over {\rho_{\bf x} (n)}} =
0.$$
\label{main}
\end{theorem}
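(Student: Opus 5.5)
The plan is to show that for large $n$ the length-$n$ palindromes force many \emph{distinct} length-$n$ factors nearby, about $k$ of them per palindrome, where $k$ grows like $\log \Pal_{\bf x}(n)$. First I dispose of a trivial case. If $\Pal_{\bf x}(n)$ stays bounded along some subsequence of $n$, then the numerator is bounded along it. Since $\bf x$ is not ultimately periodic, the Morse–Hedlund theorem \cite{Morse&Hedlund:1938} gives $\rho_{\bf x}(n) \geq n+1 \rightarrow \infty$, so the ratio tends to $0$ along that subsequence. So it suffices to treat a subsequence along which $P := \Pal_{\bf x}(n) \rightarrow \infty$, and there I must show $\rho_{\bf x}(n)/(P \log P) \rightarrow \infty$.

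Fix $n$, let $p_1, \ldots, p_P$ be the distinct length-$n$ palindromes occurring in $\bf x$, and fix one occurrence of each. For a shift parameter $k < n/2$ (to be chosen) and each pair $(i,j)$ with $0 \leq j \leq k$, let $w_{i,j}$ be the length-$n$ factor starting $j$ positions after the chosen occurrence of $p_i$. The next two steps analyse collisions among these $(k+1)P$ factors.
\begin{enumerate}
\item \emph{Same shift.} If $w_{i,j} = w_{i',j}$, then $p_i$ and $p_{i'}$ share their suffix of length $n-j > n/2$. A palindrome is determined by any suffix of length at least half its length, so $i = i'$. Hence, for fixed $j$, the map $i \mapsto w_{i,j}$ is injective.
\item \emph{Different shifts.} If $w_{i,j} = w_{i',j'} = w$ with $d = |j-j'| \geq 1$, then $w$ contains two overlapping palindromic windows whose centres differ by $d$: namely a suffix of $p_i$ of length $n-j$, read at one offset, and a suffix of $p_{i'}$ of length $n-j'$, read at another. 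The composition of the two reflections is a translation by $2d$. So the overlap, which has length at least $n-2k$, has period $2d \leq 2k$. Such a ``bad'' word is determined by its length-$2d$ block together with $O(k)$ boundary letters, so there are at most roughly $k \cdot |\Sigma|^{O(k)}$ bad words. Each bad word is counted at most $k+1$ times.
\end{enumerate}
Combining the two steps gives
$$\rho_{\bf x}(n) \;\geq\; (k+1)\bigl(P - C k\,|\Sigma|^{ck}\bigr).$$
Choosing $k = \varepsilon \log P$ with $\varepsilon$ small relative to $\log|\Sigma|$ makes the subtracted term $o(P)$, so $\rho_{\bf x}(n) \gg \varepsilon P \log P$.

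The main obstacle is that this only shows the ratio in Theorem~\ref{main} is bounded, not that it tends to $0$. The crude estimate $|\Sigma|^{O(k)}$ for the number of length-$n$ factors with a small period ignores non-periodicity. To gain the extra factor I would sharpen the count of bad words. In a factor of $\bf x$ of length $n$, a period $q \leq 2k$ covering almost all of it is a long repetition, and its root is a factor of $\bf x$ of length $q$. So the number of bad words is at most $\sum_{q \leq 2k} q\,\rho_{\bf x}(q)$ rather than $|\Sigma|^{2k}$. This allows $k$ to be taken as large as the largest $k$ with $k^2 \rho_{\bf x}(2k) = o(P)$.

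I would then split into two cases. If $\rho_{\bf x}$ grows slowly at scale $\log P$, then $k$ can be taken to be $\omega(\log P)$, and the bound gives $\rho_{\bf x}(n) = \omega(P\log P)$. If $\rho_{\bf x}$ grows fast, I would instead compare directly using the monotonicity of $\rho_{\bf x}$ and the trivial bound $P \leq |\Sigma|^{\lceil n/2 \rceil}$, i.e.\ $\log P = O(n)$. Making the two regimes meet, with the threshold $\varepsilon \rightarrow 0$ slowly, is the delicate step I expect to consume most of the effort.
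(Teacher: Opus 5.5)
Your shifting construction, the injectivity for a fixed shift, and the period-$2d$ analysis for different shifts are the right tools. You also correctly see that counting bad words by $|\Sigma|^{O(k)}$ only gives $\Pal(n)\log\Pal(n)=O(\rho(n))$. But the proof is not finished. The step you defer is the main difficulty, and the route you sketch would not close it. A dichotomy made separately for each $n$ at scale $k\approx\log P$ fails in the ``fast'' branch. There, monotonicity only yields $\rho(n)\ge\rho(2k)$, and in the borderline case $k\asymp\log P$, $k^2\rho(2k)\asymp P$, this is only of order $P/(\log P)^2$, well below $P\log P$. The bound $P\le|\Sigma|^{\lceil n/2\rceil}$ does not help either: whenever $\rho(n)\approx e^{hn}$ with $0<h<\frac12\log|\Sigma|$, it already exceeds $\rho(n)$ itself.

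What is missing is a \emph{global} dichotomy. Because $\rho(m+n)\le\rho(m)\rho(n)$, Fekete's lemma gives that $h=\lim_{n}\log\rho(n)/n$ exists and equals $\inf_n\log\rho(n)/n$.

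If $h>0$, use the fact that the first half of a palindromic factor is itself a factor, so $P\le\rho(\lceil n/2\rceil)=e^{hn/2+o(n)}$. Meanwhile $\rho(n)\ge e^{hn}$, so the ratio decays exponentially and no shifting is needed.

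If $h=0$, then $\log\rho(m)=o(m)$. Hence the number of bad words is $e^{o(k)}$ uniformly in $n$; it is at most $2k(2k+1)\rho(2k)^2$. Note that your sharpened count $\sum_{q\le 2k}q\rho(q)$ omits the unconstrained tail, of length up to $2k$, that follows the periodic prefix; this is harmless here. Now fix any $A>0$ and take $k=\lfloor A\log(P+1)\rfloor$. This is $o(n)$ because $\log P\le\log\rho(\lceil n/2\rceil)=o(n)$. The bad-word count is then $P^{o(1)}=o(P)$, so $\rho(n)\ge(1-o(1))AP\log P$ along your subsequence. Since $A$ is arbitrary, the ratio tends to $0$. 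No slowly shrinking threshold or interpolation between regimes is needed.
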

After discussing notation and presenting
some preliminary lemmas, the proof is given in
Section~\ref{mainsec}.
Furthermore, it turns
out that the numerator $\Pal_{\bf x} (n) \log (\Pal_{\bf x} (n) + 1)$
cannot be significantly improved.  This
is addressed in Section~\ref{optsec}.

\section{Notation}

Throughout this paper,
we assume words are defined over a fixed finite alphabet
$\Sigma$, with $k = |\Sigma|$.  Infinite words are written
in boldface.  If $w$ is a finite word with $w = a_0 \cdots a_{n-1}$, then 
by $w[i..j]$ we mean $a_i \cdots a_j$, and similarly for infinite words.

All words are indexed starting at position $0$.  If
$w = xyz$ for finite (possibly empty) words $x,y,z,$ then
$x$ is a {\it prefix\/} of $w$,
$y$ is a {\it factor\/} of $w$,
and $z$ is a {\it suffix\/} of $w$.
A finite word $x = x[0..n-1]$ has {\it period\/} $p$, $1 \leq p \leq n$,
if $x[i]=x[i+p]$ for $0 \leq i < n-p$.

The set of all length-$n$ factors of an infinite word $\bf x$
is denoted by $\Fac_{\bf x} (n)$, and
the factor complexity function 
$\rho_{\bf x} (n)$ is the cardinality of $\Fac_{\bf x} (n)$.

The reversal of a finite word $w$ is written $w^R$.
A word is a {\it palindrome\/} if $w = w^R$.
The palindrome complexity function
$\Pal_{\bf x} (n)$ counts the number of distinct factors of
$\bf x$ of length $n$ that are palindromes.

In the case of both $\Palx$ and $\rhox$, if 
$\bf x$ is clear from the context, we omit it as a subscript.

\section{Previous work}

There are some previous results that are relevant and put our result in
context.

\begin{theorem}
If $\bf x$ is not ultimately periodic, then
$\rho_{\bf x} (n) < \rho_{\bf x} (n+1)$ for all $n \geq 0$.
In particular, $\rho_{\bf x} (n)$ is unbounded.
\label{thm1}
\end{theorem}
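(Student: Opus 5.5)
This is the classical Morse--Hedlund theorem. We prove the contrapositive: if $\rho_{\bf x}(n) = \rho_{\bf x}(n+1)$ for some $n$, then $\bf x$ is ultimately periodic. We first note that $\rho_{\bf x}(n) \leq \rho_{\bf x}(n+1)$ always holds. Indeed, every length-$n$ factor occurring at position $i$ extends to the length-$(n+1)$ factor at position $i$, because $\bf x$ is infinite. So the map $\Fac_{\bf x}(n+1) \to \Fac_{\bf x}(n)$ that deletes the last letter is surjective. Hence strict increase fails exactly when this map is a bijection, that is, when every length-$n$ factor $w$ has a \emph{unique} right extension $wa \in \Fac_{\bf x}(n+1)$.

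Now assume $\rho_{\bf x}(n) = \rho_{\bf x}(n+1)$ and define $f \colon \Fac_{\bf x}(n) \to \Fac_{\bf x}(n)$ by letting $f(w)$ be the length-$n$ suffix of the unique $wa$. Since ${\bf x}[i+1..i+n]$ is a right extension of ${\bf x}[i..i+n-1]$ (minus its first letter), uniqueness gives
$$
{\bf x}[i+1..i+n] = f({\bf x}[i..i+n-1]) \quad \text{for all } i \geq 0 .
$$
So the sequence of windows $w_i = {\bf x}[i..i+n-1]$ is the orbit $w_{i+1} = f(w_i)$ of a self-map on a finite set of size $\rho_{\bf x}(n) \leq k^n$.

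By the pigeonhole principle, there are $i < j$ with $w_i = w_j$. Then $w_{i+t} = w_{j+t}$ for all $t \geq 0$. Comparing first letters gives $a_{m} = a_{m + (j-i)}$ for all $m \geq i$, so $\bf x$ is ultimately periodic, which is a contradiction. The case $n=0$ is covered as well: $\rho(0) = 1$, and $\rho(1) = 1$ forces $\bf x$ to be constant, which is consistent with the general argument.

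Finally, for unboundedness: $\rho_{\bf x}(0) = 1$, and strict increase of integer values gives $\rho_{\bf x}(n) \geq n+1$ for all $n$, so $\rho_{\bf x}$ is unbounded. The only step that needs care is the observation that equality of consecutive values forces unique right extensions, and this follows directly from the surjectivity noted above. The rest is the standard finite-state pigeonhole argument.
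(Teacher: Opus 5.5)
Your proof is correct. The paper gives no argument of its own and simply cites Morse and Hedlund, and what you have written is the classical proof of that result: the truncation map $\Fac_{\bf x}(n+1)\to\Fac_{\bf x}(n)$ is surjective, equality of cardinalities forces unique right extensions, this gives a deterministic successor map on a finite set, and pigeonhole then yields eventual periodicity. You were also right to treat $n=0$ separately, since there the windows are empty and comparing first letters gives nothing, whereas $\rho_{\bf x}(1)=1$ directly forces $\bf x$ to be constant.
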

\begin{proof}
A classic result of Morse and Hedlund \cite{Morse&Hedlund:1938}.
\end{proof}

\begin{proposition}
We have $\rho_{\bf x} (m+n) \leq \rho_{\bf x} (m) \rho_{\bf x} (n)$
for $m, n \geq 0$.
\label{suba}
\end{proposition}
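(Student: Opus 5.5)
The plan is to use the injective map that splits a factor into its prefix and suffix. Fix $m,n \geq 0$ and consider the map
$$\phi : \Fac_{\bf x}(m+n) \rightarrow \Fac_{\bf x}(m) \times \Fac_{\bf x}(n), \qquad w \mapsto (w[0..m-1],\, w[m..m+n-1]),$$
which sends a length-$(m+n)$ factor to the pair consisting of its length-$m$ prefix and its length-$n$ suffix.

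The first step is to check that $\phi$ lands in the stated codomain. Every prefix or suffix of a factor of $\bf x$ is itself a factor of $\bf x$. So if $w$ occurs at position $i$, then $w[0..m-1]$ occurs at position $i$ and $w[m..m+n-1]$ occurs at position $i+m$. The degenerate cases $m=0$ or $n=0$ are harmless: the only length-$0$ factor is the empty word, so $\rho_{\bf x}(0)=1$, and the inequality is trivial there.

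The second step is injectivity, which is immediate since $w$ is the concatenation of its two components: $w = w[0..m-1]\, w[m..m+n-1]$. Thus $w$ is recovered from $\phi(w)$.

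It follows that
$$\rho_{\bf x}(m+n) = |\Fac_{\bf x}(m+n)| \leq |\Fac_{\bf x}(m)|\cdot|\Fac_{\bf x}(n)| = \rho_{\bf x}(m)\rho_{\bf x}(n).$$
There is no real obstacle in this argument. The only point needing care is the convention that the empty word is a factor, so that $\rho_{\bf x}(0)=1$ and the edge cases are covered.
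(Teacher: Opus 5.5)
Your proposal is correct and is essentially the paper's argument: the paper's one-line proof notes that every factor of length $m+n$ is the concatenation of its length-$m$ prefix and its length-$n$ suffix, both factors of $\bf x$, which is exactly your injective map $\phi$. Your explicit treatment of the cases $m=0$ or $n=0$ via $\rho_{\bf x}(0)=1$ is a harmless extra detail that the paper leaves implicit.
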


\begin{proof}
Every factor of length $m+n$ is a concatenation of a prefix of length $m$
and a suffix of length $n$.
\end{proof}

\begin{corollary}
The limit $\lim_{n \rightarrow \infty} {{\log \rho_{\bf x} (n)} \over n} $
exists and equals $\inf_{n \geq 1} (\log \rho_{\bf x} (n))/n$.
\end{corollary}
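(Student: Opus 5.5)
This is Fekete's lemma applied to the sequence $a_n = \log \rho_{\bf x}(n)$. Proposition~\ref{suba} gives subadditivity: taking logarithms of $\rho_{\bf x}(m+n) \leq \rho_{\bf x}(m)\rho_{\bf x}(n)$ yields $a_{m+n} \leq a_m + a_n$ for all $m,n \geq 0$. For these logarithms to be finite we need $\rho_{\bf x}(n) \geq 1$ for every $n$. This holds because ${\bf x}$ is infinite, so it has at least one factor of each length. Hence $a_n \geq 0$, and the sequence is bounded below.

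Let $L = \inf_{n \geq 1} a_n/n$, so $L \geq 0$. By definition $\liminf_{n\to\infty} a_n/n \geq L$, so it suffices to show $\limsup_{n\to\infty} a_n/n \leq L$.

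Fix $\epsilon > 0$ and choose $m \geq 1$ with $a_m/m < L + \epsilon$. Write any $n$ as $n = qm + r$ with $0 \leq r < m$. Applying subadditivity repeatedly gives
$$a_n \leq q\,a_m + a_r \leq q\,a_m + C, \qquad C = \max_{0 \leq r < m} a_r .$$
Here $C$ is finite and depends only on $m$. Dividing by $n$ and using $qm \leq n$ (together with $a_m \geq 0$) gives
$$\frac{a_n}{n} \leq \frac{qm}{n}\cdot\frac{a_m}{m} + \frac{C}{n} \leq \frac{a_m}{m} + \frac{C}{n}.$$
Letting $n \to \infty$ gives $\limsup_{n\to\infty} a_n/n \leq a_m/m < L + \epsilon$. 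Since $\epsilon$ was arbitrary, the limit exists and equals $L = \inf_{n\geq1} (\log\rho_{\bf x}(n))/n$.

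The only delicate points are bookkeeping. The first is that $a_0 = \log 1 = 0$, since the empty word is the unique factor of length $0$, so the remainder term $a_r$ is harmless even when $r = 0$. The second is the nonnegativity $a_m \geq 0$, which justifies replacing $qm/n$ by $1$. No real obstacle is expected.
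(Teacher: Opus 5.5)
Your proof is correct and follows the paper's argument. The paper simply cites Fekete's lemma for the subadditive sequence $(\log \rho_{\bf x}(n))_{n \geq 0}$ from Proposition~\ref{suba}, while you also check that $\rho_{\bf x}(n) \geq 1$, so the logarithms are finite and nonnegative, and you prove Fekete's lemma inline.
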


\begin{proof}
Follows from Fekete's lemma \cite{Fekete:1923} applied to the subadditive
sequence $(\log \rho_{\bf x} (n))_{n \geq 0}$.
\end{proof}

The next two results are {\it not\/} used in this paper, but indicate previous
related work.

\begin{theorem}[Theorem 12 of \cite{Allouche&Baake&Cassaigne&Damanik:2003}]
If $\bf x$ is not ultimately periodic, then
$$\Pal_{\bf x} (n) < {16 \over n} \rho_{\bf x} (n + \lfloor n/4 \rfloor)$$
for all $n \geq 1$.
\label{eight}
\end{theorem}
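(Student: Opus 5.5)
The plan is a double-counting argument between length-$n$ palindromes and length-$N$ factors, where $N = n + \lfloor n/4 \rfloor$. Set $m = \lfloor n/4 \rfloor$ and consider the set $\mathcal{P}$ of pairs $(u, j)$ such that:
\begin{itemize}
\item $u \in \Fac_{\bf x}(N)$;
\item $0 \leq j \leq m$;
\item $u[j..j+n-1]$ is a palindrome.
\end{itemize}
I will bound $|\mathcal{P}|$ from below in terms of $\Pal_{\bf x}(n)$ and from above in terms of $\rho_{\bf x}(N)$. Comparing the two bounds should give $\Pal_{\bf x}(n) < (16/n)\,\rho_{\bf x}(N)$.

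\emph{Upper bound.} The key structural fact concerns two length-$n$ palindromes occurring in $u$ at offsets $j < j'$ with $d = j' - j \leq m$. Their overlap has length at least $3n/4$. Composing the two reflections, about centers $d$ apart, is a translation by $2d$. Hence the factor $u[j..j'+n-1]$ has period $2d$, and in fact it is the extension of a single periodic pattern.

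From this I will show that the palindromic offsets in $[0,m]$ inside a fixed $u$ cannot be too dense unless $u$ is highly periodic. If $p = 2d$ is the minimal period, the palindromic offsets form an arithmetic progression of difference $d$. In particular, a fixed $u$ carries at most about $m/d + 1$ palindromes, and they are all determined by one of them together with the period. So the number of pairs in $\mathcal{P}$ with a given $u$ is small on average. I would make this precise by charging each pair $(u,j)$ either to $u$ alone, when $u$ has few palindromic offsets, or to the short period of $u$, when it has many. Fine--Wilf then limits how many windows can have a period at most $n/2$ while still lying in a word that is not ultimately periodic.

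\emph{Lower bound.} For each palindrome $w$ of length $n$ occurring in ${\bf x}$, I want roughly $m$ distinct pairs $(u,j)$ with $u[j..j+n-1] = w$, obtained by sliding a length-$N$ window across an occurrence of $w$. There are two issues.
\begin{itemize}
\item Occurrences near the start of ${\bf x}$ may lack room on the left. I would handle these by using reversal-closure of palindromes, or by taking a later occurrence when one exists, and by treating the few palindromes occurring only in the prefix of length $m$ separately. There are at most $m+1$ of these, which is $O(n)$ and is absorbed because $\rho_{\bf x}(N) > N$ by Theorem~\ref{thm1}.
\item Distinct windows at different offsets might give the same pair. This cannot happen, since the pairs carry the offset $j$.
\end{itemize}
Thus $|\mathcal{P}| \gtrsim (m+1)\Pal_{\bf x}(n)$, minus the boundary correction.

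\emph{Combining and the main obstacle.} Combining the bounds gives $(n/4)\Pal_{\bf x}(n) \lesssim c\,\rho_{\bf x}(N)$ with $c \leq 4$, which is the claimed $16/n$. I expect the main obstacle to be the upper bound with an explicit constant: showing that the average number of palindromic offsets per length-$N$ factor is at most $4$. This needs a careful case split on whether $u$ has a period at most $n/2$. For periodic windows, I would try first to inject the extra palindromes into distinct factors of length $N$ that break the period. Such factors exist because ${\bf x}$ is not ultimately periodic, and by Theorem~\ref{thm1} each periodic run must eventually be followed by a factor that breaks it.
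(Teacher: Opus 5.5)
The paper gives no proof of this statement: it is quoted from Allouche--Baake--Cassaigne--Damanik and not used. So your argument has to stand on its own, and as written it has a genuine gap. The upper bound you aim for, $|\mathcal{P}|\lesssim 4\rho_{\bf x}(N)$, is false. The reason is that $\mathcal{P}$ counts \emph{offsets}, and a single palindrome can sit at many offsets of many windows that have no short period.

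Take ${\bf x}=1\,0\,1\,0^2\,1\,0^4\,1\,0^8\,1\cdots$, which is not ultimately periodic.
\begin{itemize}
\item Every word $0^k10^{N-1-k}$ is a factor. It contains $0^n$ at each offset $j\in[0,m]$ with $j>k$ or $j\le k-n$. Together with $0^N$ this gives $|\mathcal{P}|\ge (m+1)^2\ge n^2/16$.
\item A length-$N$ factor with two $1$'s must contain one of the first $\lfloor\log_2 N\rfloor+1$ gaps, so $\rho(N)\le N(\log_2 N+2)+1$.
\end{itemize}
Hence $|\mathcal{P}|/\rho(N)\to\infty$. For $k<m$ or $k\ge n$, the windows $0^k10^{N-1-k}$ carry up to $m$ offsets but have no period $\le n/2$, so neither branch of your charging scheme applies to them.

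The boundary step also fails. With constant $4$ you would get $\Pal(n)\le 4\rho(N)/(m+1)+(m+1)$. But the slack $16/n-4/(m+1)$ equals $16/(n(n+1))$ when $n\equiv 3\pmod{4}$. So $m+1$ exceptional palindromes are not absorbed by $\rho(N)>N$; that would need $\rho(N)$ of order $n^3$.

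The repair is to count pairs $(u,w)$ with $u\in\Fac(N)$ and $w$ a length-$n$ palindrome occurring in $u$ at some offset in $[0,m]$. That is, count distinct palindromes rather than offsets.

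\emph{Upper bound.} Iterate your reflection observation over consecutive palindromic offsets, using Fine--Wilf. Consecutive gaps $d,d'$ satisfy $2d+2d'\le n/2$, and consecutive periodic segments overlap in a whole length-$n$ palindrome. This shows that the palindromic offsets of $u$ lie in a progression $j_1+G\mathbb{Z}$ inside a segment of period $2G$. So $u$ contains at most two distinct such palindromes, and there are at most $2\rho(N)$ pairs.

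\emph{Lower bound.} You must show that each palindrome $w$ lies at offsets $0,\dots,m$ of $m+1$ \emph{distinct} windows $v_j={\bf x}[i-j..i-j+N-1]$ around an occurrence $i\ge m$. This is where non-ultimate periodicity really enters.
\begin{itemize}
\item If the least period $p$ of $w$ exceeds $m$, then $v_j=v_{j'}$ would give $w$ the period $j'-j\le m$, a contradiction.
\item If $p\le m$, move to the last occurrence of $w$ in the maximal $p$-periodic run containing it. This run is finite on the right because ${\bf x}$ is not ultimately periodic. Now $v_j=v_{j'}$ forces $p\mid j'-j$ by Fine--Wilf. It then extends the run at least $m-j\ge j'-j\ge p$ positions past that occurrence, so $w$ would occur again $p$ positions later, a contradiction.
\end{itemize}

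\emph{Boundary.} Palindromes occurring only at positions $<m$ all lie in the prefix window ${\bf x}[0..N-1]$ at offsets $<m$. By the upper-bound lemma there are at most two of them.

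\emph{Conclusion.} Combining, $(m+1)(\Pal(n)-2)\le 2\rho(N)$. Since $m+1>n/4$ and $\rho(N)>n$, this gives $\Pal(n)<8\rho(N)/n+2\le 16\rho(N)/n$.
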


\begin{theorem}[Theorem 1.2 of \cite{Balazi&Masakova&Pelantova:2007}]
Suppose $\bf x$ is uniformly recurrent.
\begin{itemize}
\item[(a)] If the set of factors of $\bf x$ is not closed under
reversal, then $\Pal_{\bf x} (n) =0$ for all sufficiently large
$n$.
\item[(b)] If the set of factors of $\bf x$ is closed under reversal,
then
$$\Pal_{\bf x} (n) + \Pal_{\bf x} (n+1) \leq 
\rho_{\bf x}(n+1) - \rho_{\bf x}(n) + 2$$
for $n \geq 0$.
\end{itemize}
\label{nine}
\end{theorem}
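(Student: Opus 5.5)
We prove the two parts separately: part (a) by a direct covering argument from uniform recurrence, and part (b) by a fixed-point count on the Rauzy graph.

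\emph{Part (a).} Suppose some factor $w$ of $\bf x$ has $w^R$ not a factor. By uniform recurrence there is an $N$ such that every factor of length $N$ contains $w$. If $p$ is a palindromic factor with $|p| \geq N$, then $p$ contains $w$. Hence $p = p^R$ contains $w^R$, and so $w^R$ is a factor, a contradiction. Therefore $\Pal_{\bf x}(n) = 0$ for $n \geq N$.

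\emph{Part (b): setting up the graph.} Let $\Gamma_n$ be the Rauzy graph of $\bf x$. Its vertices are the elements of $\Fac_{\bf x}(n)$, and each $a u \in \Fac_{\bf x}(n+1)$ (with $|a|=1$) is an edge from $a u[0..n-2]$ to $u$; more precisely, the edge $w$ joins its length-$n$ prefix to its length-$n$ suffix. So $V = \rho(n)$ and $E = \rho(n+1)$. Uniform recurrence, indeed mere recurrence, makes $\Gamma_n$ strongly connected, so the underlying undirected graph is connected. Closure under reversal gives a graph involution $\sigma$: $\sigma$ sends a vertex $u$ to $u^R$ and an edge $w$ to $w^R$, reversing its orientation. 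Under $\sigma$, the fixed vertices are exactly the palindromes of length $n$, and the fixed edges are exactly the palindromes of length $n+1$. A fixed edge $w$ goes from $u$ to $v$ with $v = u^R$, so $\sigma$ swaps its endpoints. Thus on the geometric realization $|\Gamma_n|$, $\sigma$ fixes only the midpoint of such an edge.

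\emph{Part (b): the count.} The fixed-point set of $\sigma$ on $|\Gamma_n|$ is a finite set of isolated points: the palindromic vertices together with the midpoints of palindromic edges. The only exception would be a loop at a palindromic vertex. Such a loop means $a^{n+1}$ is a factor, which in a uniformly recurrent word forces ${\bf x} = a^\omega$. That case (or the periodic case generally) is checked by hand, where the inequality is trivial. The Lefschetz fixed-point theorem for a simplicial involution of a graph gives
$$\Pal(n) + \Pal(n+1) = L(\sigma) = \operatorname{tr}(\sigma_*|H_0) - \operatorname{tr}(\sigma_*|H_1) = 1 - \operatorname{tr}(\sigma_*|H_1),$$
since $H_0 = \mathbb{Q}$ by connectivity. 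Because $\sigma_*$ is an involution on $H_1$, its eigenvalues are $\pm 1$, so $-\operatorname{tr}(\sigma_*|H_1) \leq \dim H_1 = E - V + 1$. Hence
$$\Pal(n)+\Pal(n+1) \leq 1 + \rho(n+1) - \rho(n) + 1,$$
which is the claimed inequality.

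\emph{Main obstacle.} The delicate step is the fixed-point bookkeeping. One has to verify that each fixed point contributes index exactly $+1$ (it is isolated and $\sigma$ acts as a reflection near it). One also has to make sure to subdivide fixed edges first, so that $\sigma$ is simplicial and Lefschetz applies. If one prefers to avoid topology, the same bound follows by choosing a $\sigma$-invariant spanning structure. One shows combinatorially that each fixed element beyond the first forces an independent cycle $c$ with $\sigma(c) = -c$. I would fall back on that argument only if the topological route proved awkward to justify.
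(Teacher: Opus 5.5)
\textbf{The false step.} Your strategy works, but one step is false as written. You assert that a loop in $\Gamma_n$ forces ${\bf x}=a^\omega$. A loop only says that $a^{n+1}$ is a factor for this particular $n$, and that is common:
\begin{itemize}
\item for $n=0$, $\Gamma_0$ is a single vertex with one loop per letter;
\item for $n=1$, the Fibonacci word and the Thue--Morse word both contain $00$, which gives a loop at the vertex $0$ of $\Gamma_1$.
\end{itemize}
Since (b) is claimed for every $n\ge 0$, these cases cannot be set aside.

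\textbf{Why it does no harm.} Nothing needs to be set aside, because loops are not exceptional. The edge $a^{n+1}$ is a palindrome, so $\sigma$ reverses the loop. On that circle $\sigma$ is a reflection whose only fixed points are the vertex $a^n$ and the midpoint of the loop. These are exactly the contributions of the palindromes $a^n$ and $a^{n+1}$. The appeal to checking periodic words ``by hand'' can also be dropped; it is not actually trivial for general periodic words anyway.

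\textbf{A cleaner version.} You can avoid fixed-point indices and subdivision altogether. Apply the Hopf trace formula to the cellular chain complex $C_1=\mathbb{Q}^{\Fac(n+1)}\to C_0=\mathbb{Q}^{\Fac(n)}$, where $\partial e_w$ is the suffix of $w$ minus its prefix.
\begin{itemize}
\item Define $\sigma_\#(u)=u^R$ and $\sigma_\#(e_w)=-e_{w^R}$. This is a chain map, because if $w$ runs from $p$ to $s$, then $w^R$ runs from $s^R$ to $p^R$.
\item Its traces are $\Pal(n)$ on $C_0$ and $-\Pal(n+1)$ on $C_1$.
\item The algebraic identity $\sum_i(-1)^i\operatorname{tr}(\sigma_\#|C_i)=\sum_i(-1)^i\operatorname{tr}(\sigma_*|H_i)$ then gives your formula $\Pal(n)+\Pal(n+1)=1-\operatorname{tr}(\sigma_*|H_1)$ with no case distinctions.
\end{itemize}
Loops are harmless here, since they simply have $\partial e=0$. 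This also removes the index verification you flagged as the main obstacle. Incidentally, near a palindromic vertex of degree greater than $2$, $\sigma$ freely permutes the incident half-edges rather than acting as a single reflection, though the index is still $+1$.

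\textbf{Comparison with the source.} With that repair your proof is complete, and it takes a genuinely different route. The paper gives no proof of this statement; it quotes it from Bal\'a\v{z}i--Mas\'akov\'a--Pelantov\'a and explicitly does not use it. Their argument, as I recall it, is combinatorial. They analyse the reversal symmetry on the (reduced) Rauzy graph, match palindromes of length $n$ or $n+1$ with symmetric special vertices or with the centres of symmetric simple paths, and count these against edges minus vertices. Your trace identity compresses that bookkeeping into one line and yields more:
\begin{itemize}
\item It uses only the \emph{weak} connectivity of $\Gamma_n$. This holds for every infinite word, since the walk along $\bf x$ visits every vertex. So (b) holds for any word whose factor set is closed under reversal, and uniform recurrence is needed only for (a).
\item It gives the exact identity
$$\rho(n+1)-\rho(n)+2-\Pal(n)-\Pal(n+1)=2\dim H_1(\Gamma_n)^{+},$$
where $H_1^{+}$ is the $(+1)$-eigenspace of $\sigma_*$. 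So the defect is always even, and equality holds exactly when $\sigma_*=-\mathrm{id}$ on $H_1(\Gamma_n)$.
\end{itemize}
The combinatorial route, in exchange, shows explicitly where the palindromes lie. That information is what one needs to compute palindromic complexity exactly for concrete families such as interval exchanges.
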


However, neither Theorem~\ref{eight} nor Theorem~\ref{nine} seem strong enough to
prove Theorem~\ref{main}.

\section{Lemmas}

In this section and the next, the goal is to prove Theorem~\ref{main}.
We fix an infinite word $\bf x$ and
omit it as a subscript on $\Pal$ and $\rho$.
The idea of the proof is as follows.  
The case where subword complexity is exponentially large
is easy.  Otherwise, we show that for each $D \geq 1$
and all sufficiently large $n$ there is a way of associating,
with each length-$n$ palindrome $u$ appearing in $\bf x$,
a list $S_u$ of length-$n$ factors of $\bf x$ of cardinality
$D+1$ such that the sets $S_u$ are `almost' pairwise disjoint---hence
giving `roughly' $(D+1)\cdot \Pal(n) $ factors of length $n$.
Thus for all these $n$ the ratio of ordinary
factors to palindrome factors is roughly $D+1$.

We will need two lemmas.

\begin{lemma}
Let $w$ be a palindrome, and $x$ a nonempty proper palindromic prefix of $w$.
Then $|w|-|x|$ is a period of $w$.
\label{lem1}
\end{lemma}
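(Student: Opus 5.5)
Write $n = |w|$, $m = |x|$ and $p = n - m$. The hypotheses give $1 \leq p \leq n-1$. By the definition of period in the Notation section, we need to show that $w[i] = w[i+p]$ for $0 \leq i < n-p = m$. The plan is to combine the three available symmetries: $w$ is a palindrome, $x$ is a palindrome, and $x$ is a prefix of $w$.

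First, since $w$ is a palindrome, $w[j] = w[n-1-j]$ for all $0 \leq j < n$. Reversing $w$ sends the prefix $x$ to a suffix equal to $x^R$. Since $x^R = x$, the word $x$ is also a suffix of $w$, and this suffix starts at position $n - m = p$. So we have the two identities
$$w[i] = x[i] \quad\text{and}\quad w[i+p] = x[i] \qquad (0 \leq i < m).$$
From these, $w[i] = w[i+p]$ for all $0 \leq i < m$, which is exactly the statement that $p$ is a period of $w$.

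To double-check by index-chasing without the suffix observation: for $0 \leq i < m$,
$$w[i+p] = w[n-1-i-p] = w[m-1-i] = x[m-1-i] = x[i] = w[i],$$
using in turn that $w$ is a palindrome, that $n - p = m$, that $x$ is a prefix of $w$ (valid because $0 \leq m-1-i < m$), and that $x$ is a palindrome.

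There is no real obstacle here. The only points needing care are confirming that every index stays in range, which holds since $0 \leq i < m$, and checking that $1 \leq p \leq n$ as the definition of period requires. The latter uses that $x$ is nonempty and proper, so $0 < m < n$.
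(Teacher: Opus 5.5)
Your proof is correct and follows the paper's own one-line argument: since $w$ is a palindrome, reversing it turns the prefix $x$ into a suffix $x^R = x$, and a word whose prefix and suffix of length $m$ coincide has period $|w| - m$. Your index-chasing check and the verification that $1 \leq p \leq n-1$ are fine extra detail that the paper leaves implicit.
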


\begin{proof}
This is a standard lemma; see \cite[Lemma 2]{Frid&Puzynina&Zamboni:2013}, for
example.  It has a $1$-line proof:  if $x$ is a prefix of $w$ then 
$x^R = x$ is a suffix of $w$, and so $w$ has period $|w|-|x|$.
\end{proof}

Now we define an ``exceptional set'' ${\cal E}_D (n)$ and bound
its cardinality.
\begin{lemma}
Let $D \geq 1$ be an integer and $n > 2D$. 
Define
$$ {\cal E}_D (n) = \{ v \in \Fac (n) \suchthat
	v \text{ has a prefix $z$ with $|z| \geq n-2D$
	and period $\leq 2D$} \},$$
and set $E_D(n) := |{\cal E}_D (n)|$.
Then 
\begin{equation}
(D+1) \Pal (n) \leq \rho (n) + D E_D (n)
\label{eq1}
\end{equation}
and
\begin{equation}
E_D(n) \leq 2D(2D+1) \rho(2D)^2.
\label{eq2}
\end{equation}
\label{exceptional}
\end{lemma}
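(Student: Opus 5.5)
We prove (\ref{eq1}) by a double-counting argument over ``shifted windows'' around occurrences of palindromes, and (\ref{eq2}) by describing each exceptional factor with a small amount of data.

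\textbf{Setup for (\ref{eq1}).} For each length-$n$ palindrome $u$ occurring in $\bf x$, fix one occurrence $u = {\bf x}[i..i+n-1]$. We may take $i \geq D$: if some palindrome occurs only near the start, enlarge $n$ or pass to a suffix of $\bf x$; we will check this does not affect the count. Define
$$S_u = \{ v_j(u) := {\bf x}[i-j..i-j+n-1] \suchthat 0 \leq j \leq D\}.$$
Thus $v_j(u)$ has $u[0..n-j-1]$ as a suffix. The key observation is that, since $n>2D$, we have $n-j \geq \lceil n/2\rceil$, and a palindrome is determined by its first $\lceil n/2 \rceil$ letters. Two consequences follow.
\begin{itemize}
\item[(i)] Given $v$ and $j$, the palindrome $u$ with $v = v_j(u)$ is unique. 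Hence every $v \in \Fac(n)$ lies in at most $D+1$ of the sets $S_u$.
\item[(ii)] The $D+1$ words $v_j(u)$ must be shown distinct, or else the defect absorbed by the exceptional set. Coincidences $v_j(u)=v_{j'}(u)$ force a small period, which is handled by the same argument as the main claim below.
\end{itemize}

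\textbf{Main claim.} If $v \in S_u \cap S_{u'}$ with $u \neq u'$, then $v \in {\cal E}_D(n)$. Write $v = v_j(u) = v_{j'}(u')$. If $j=j'$, then $u$ and $u'$ share a prefix of length $n-j \geq n/2$, so $u=u'$. Hence we may assume $j<j'$, and set $d = j'-j$ and $m = n-j'$. Comparing positions of $v$ gives
$$u[d..d+m-1] = u'[0..m-1].$$
Reversing both sides and using that $u$ and $u'$ are palindromes gives
$$u[j..n-1-d] = u'[j'..n-1].$$
Chaining these two equalities of $u'$ with $u$ yields $u[t] = u[t+2d]$ on a long interval. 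So $u$, and hence the corresponding stretch of $v$, has period $2d \leq 2D$ on a factor of length about $n - O(D)$. This is the step I expect to be the main obstacle: the bookkeeping of indices so that the periodic stretch is a \emph{prefix} of $v$ of length at least $n-2D$. If the left shifts $j$ do not line this up, I would switch to right shifts ${\bf x}[i+j..i+j+n-1]$, or split the shift budget between left and right, choosing whichever orientation places the periodic part at the start of $v$. The palindromic symmetry of $u$ should make the two orientations interchangeable.

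\textbf{Deriving (\ref{eq1}).} By the main claim, each $v \notin {\cal E}_D(n)$ lies in at most one $S_u$, and by (i) each $v \in {\cal E}_D(n)$ lies in at most $D+1$ of them. Therefore
$$(D+1)\Pal(n) = \sum_u |S_u| \leq \bigl(\rho(n) - E_D(n)\bigr) + (D+1)E_D(n) = \rho(n) + D\,E_D(n).$$

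\textbf{Proof of (\ref{eq2}).} A word $v \in {\cal E}_D(n)$ is determined by four pieces of data:
\begin{itemize}
\item[(a)] the length $|z| \in [n-2D, n]$ of its periodic prefix, giving at most $2D+1$ choices;
\item[(b)] the period $p \leq 2D$, giving at most $2D$ choices;
\item[(c)] the first $p$ letters of $v$, which form a prefix of a length-$2D$ factor of $\bf x$ (this uses $n > 2D$), giving at most $\rho(2D)$ choices;
\item[(d)] the remaining $n-|z| \leq 2D$ letters, which form a prefix of some length-$2D$ factor of $\bf x$, giving at most $\rho(2D)$ choices.
\end{itemize}
Multiplying these counts gives $E_D(n) \leq 2D(2D+1)\rho(2D)^2$.
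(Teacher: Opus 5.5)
\textbf{Verdict: genuine gap.} Your proof of \eqref{eq2} is fine and matches the paper's, but the argument for \eqref{eq1} breaks at exactly the step you flagged, and the obstruction is not just bookkeeping.

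\emph{Left shifts give a periodic suffix, not a prefix.} With left shifts $v_j(u)={\bf x}[i-j..i-j+n-1]$, the palindrome $u[j..n-j-1]$ occupies positions $2j..n-1$ of $v$. Your chaining therefore produces a periodic \emph{suffix} of $v$, and in this orientation the main claim is false. Take $D=2$, $n=9$ and ${\bf x}=211(01)^40\cdots$. The palindromes $u=101010101$ (at position $2$) and $u'=010101010$ (at position $3$) give
\[
v_1(u)=v_2(u')=110101010 .
\]
No prefix of length $\geq 5$ of this word has period $\leq 4$, so $v\notin{\cal E}_2(9)$. The hope that palindromic symmetry makes the two orientations interchangeable does not work: $\Fac(n)$ need not be closed under reversal, and ${\cal E}_D(n)$ is defined by prefixes.

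\emph{The assumption $i\geq D$ is unjustified.} A palindrome may occur only at position $0$ of a non-recurrent word, and passing to a suffix of $\bf x$ can remove it.

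\emph{The fix (the paper's route).} Use right shifts $T(u,d)={\bf x}[i_u+d..i_u+d+n-1]$ for $0\le d\le D$, which always exist. Then $T(u,d)$ has the palindrome $u[d..n-d-1]$, of length $n-2d$, as a \emph{prefix}. If $T(u,d)=T(v,e)$ with $d<e$, the common word has nested palindromic prefixes of lengths $n-2d>n-2e$. By Lemma~\ref{lem1} the longer one has period $2(e-d)\le 2D$, and its length is at least $n-2D$. This one-line observation replaces your index chaining.

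\emph{The counting also needs repair.} The observation above holds equally when $u=v$, which settles your point (ii). Windows of a single palindrome really can coincide, e.g.\ $u=0^n$ inside a long run of $0$'s. So the identity $(D+1)\Pal(n)=\sum_u|S_u|$ is false in general. Instead count pairs with multiplicity,
\[
m(z)=\#\{(u,d): T(u,d)=z\},
\]
so that $(D+1)\Pal(n)=\sum_z m(z)$. You then have $m(z)\le D+1$ always, and $m(z)\le 1$ for $z\notin{\cal E}_D(n)$.
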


\begin{proof}
For each 
length-$n$ palindrome $u$ occurring in $\bf x$, let $i_u$ be the starting
position of the first occurrence of $u$ in $\bf x$,
and define
$$T(u,d) = {\bf x}[i_u+d..i_u+d+n-1]$$
for $0 \leq d \leq D$.  So $T(u,d)$ is a length-$n$ factor of $\bf x$,
beginning at position $i_u + d$.

\begin{claim}
For each fixed $d$, $0 \leq d \leq D$, the map
$u \rightarrow T(u,d)$ is injective.
\label{claim1}
\end{claim}

\begin{proof}
First fix $d$.
We have $u = {\bf x}[i_u..i_u+n-1]$
and $T(u,d) = {\bf x}[i_u+d..i_u+d+n-1]$.
The length $n-d$ prefix of $T(u,d)$ is
${\bf x}[i_u+d..i_u+n-1]$, which is the
length $n-d$ suffix of $u$.  Assume $T(u,d) = T(v,d)$.
This implies that the length $n-d$ suffix of $u$ and $v$ coincide.
Since $d \leq D < n/2$,
we have $n-d > n/2$.  But every palindrome of length $n$
is uniquely determined by a suffix of length $> n/2$.  So $u = v$.
Thus the map is injective.
\end{proof}

\begin{claim}
Suppose $z = T(u,d) = T(v,e)$ for length-$n$ palindromic factors
$u,v$ of $\bf x$, and $0 \leq d < e \leq D$. Then
$z \in {\cal E}_D(n)$.
\label{claim2}
\end{claim}

\begin{proof}
We show that $z \in {\cal E}_D (n)$.
Write $u = u_0 \cdots u_{n-1}$, so that
$z = u_d \cdots u_{d+n-1}$.
The prefix of $z$ of length $n-2d$ is then
$z' = u_d \cdots u_{n-d-1}$.
Since $u = u_0 \cdots u_{d-1} z' u_{n-d} \cdots u_{n-1}$,
the word $z'$ results from removing the prefix and suffix of length
$d$ from the palindrome $u$.  Thus $z'$ is a palindrome.
So $u_{d+r} = u_{n-d-1-r}$ for $0 \leq r < n-2d$.
So $z$ has a palindromic prefix $w$ of length $n-2d$.

By exactly the same argument, $z = T(v,e)$ has a palindromic
prefix $y$ of length $n-2e$.

Since $d<e$, the word $y$ is a nonempty proper
palindromic prefix of $w$. By Lemma~\ref{lem1}, the word $w$ has period
\[
   |w|-|y|=(n-2d)-(n-2e)=2(e-d)\le 2D.
\]
Moreover, $|w|=n-2d\ge n-2D$. Hence $z$ has a prefix of length at least
$n-2D$ and period at most $2D$, so $z$ lies in the
exceptional set ${\cal E}_D(n)$.
\end{proof}

Now we are ready to finish the proof of Lemma~\ref{exceptional}.  For
$z \in \Fac (n)$, define
$m(z)$ to be the number of pairs $(u,d)$ with
$u$ a length-$n$ palindromic factor of $\bf x$,
$0 \leq d \leq D$, and $T(u,d) = z$.

By Claim~\ref{claim1}, for each $d$ there is at most one
palindrome $u$ with $T(u,d) = z$.  So $m(z) \leq D+1$
for each $z \in \Fac (n)$.

If $z \not\in {\cal E}_{D} (n)$, then $m(z) \leq 1$, because
Claim~\ref{claim1} rules out $T(u,d) = T(v,d) = z$ for
distinct $u,v$, while $T(u,d) = T(v,e) = z$ for
distinct $d,e$ forces $z \in {\cal E}_{D}(n)$ by Claim~\ref{claim2}.

Hence we have
\begin{align*}
(D+1) \Pal (n) &= \sum_{z \in \Fac (n)} m(z) \\
&\leq |\Fac (n) \setminus {\cal E}_D (n)| + 
(D+1) |\Fac (n) \cap {\cal E}_D (n) | \\
&= |\Fac (n)| + D|\Fac(n) \cap {\cal E}_D (n)|  \\
&= \rho(n) + D|\Fac(n) \cap {\cal E}_D (n) | \\
&\leq \rho(n) + DE_{D} (n).
\end{align*}

It remains to bound $E_D(n)$.   Suppose $v \in {\cal E}_D (n)$.
Then $v$ has a prefix $z$ of length $N \geq n-2D$ and
period $p \leq 2D$.  Thus $v$ is completely determined by
specifying
\begin{itemize}
\item the period $p$, $1 \leq p \leq 2D$, giving $2D$ choices;
\item the first $p$ letters of $v$, which determine $z$ and provide $\rho(2D)$
choices;
\item the length $r = n-N$ of the remaining suffix of $v$,
where $0 \leq r \leq 2D$,
giving
at most $2D+1$ choices;
\item the suffix of $v$ of length $r$, giving at most $\rho(2D)$ choices.
\end{itemize}
It follows that $E_D (n) \leq 2D(2D+1) \rho(2D)^2$.
\end{proof}

\section{Proof of the main result}
\label{mainsec}

\begin{proof}[Proof of Theorem~\ref{main}.]
Assume that $\bf x$ is not ultimately periodic.  The goal is
to prove that the ratio
$$R(n) :=  {{\Pal(n) \log (\Pal (n) + 1)} \over {\rho (n)}} $$
tends to $0$ as $n \rightarrow \infty$.
Define 
$$h = \lim_{n \rightarrow \infty} {{\log \rho (n)} \over n} ,$$
which exists by Proposition~\ref{suba}.

The easy case is $h > 0$.  A length-$n$ palindrome is completely
determined by its prefix of length $\lceil n/2 \rceil$, and
such a prefix is also a factor of $\bf x$.
So $\Pal (n) \leq \rho (\lceil n/2 \rceil)$.
Since $\log \rho(n) = hn + o(n)$, we have
$\log \rho (\lceil n/2 \rceil) = (h/2) n + o(n)$.
If $\Pal (n) = 0$, then $R(n) = 0$.
Otherwise, for those $n$ where $R(n) > 0$, we have
\begin{align*}
\log R(n) &\leq
\log \rho(\lceil n/2 \rceil) + \log \log(\rho(\lceil n/2 \rceil ) + 1)
 - \log \rho (n)  \\
 & = -{h \over 2} n + o(n),
\end{align*}
which tends to $-\infty$.  Hence $R(n)$ tends to $0$.

The harder case is $h = 0$.  
Suppose, to get a contradiction, that $R(n)$ does not tend to $0$.
Then there exist \(\varepsilon>0\) and an
increasing sequence $(n_j)_{j \geq 1}$ such that
$ R(n_j)\ge \varepsilon$ for all $j\geq 1$.
Along this particular sequence, \(\Pal(n_j)\) must tend to infinity.  Indeed, if some infinite subsequence \(n_{j_\ell}\) satisfied \(\Pal(n_{j_\ell})\le C\), then
\[
  \Pal(n_{j_\ell})\log(\Pal(n_{j_\ell})+1)
  \le C\log(C+1),
\]
while \(\rho(n_{j_\ell})\to\infty\), since \(\bf x\) is not ultimately periodic.  Hence
$  R(n_{j_\ell})\to 0$,
contradicting $R(n_j)\ge \varepsilon$.  Thus
$ \Pal(n_j)\to\infty$.  

Fix \(A>0\), and set
\[
  D_j=\left\lfloor A\log(\Pal(n_j)+1)\right\rfloor.
\]
Since \(\Pal(n_j)\to\infty\), we have \(D_j\to\infty\).  Also, because each length-\(n\) palindrome is determined by its first \(\lceil n/2\rceil\) letters,
we have $ \Pal(n)\le \rho(\lceil n/2\rceil)$.
Using \(\log\rho(m)=o(m)\), this gives
$ \log(\Pal(n_j)+1)=o(n_j)$.
Consequently $D_j=o(n_j)$,
and in particular \(D_j<n_j/2\) for all sufficiently large \(j\).  

We now apply Lemma~\ref{exceptional}.
Since \(\log\rho(m)=o(m)\), Eq.~\eqref{eq2} implies that
\[
  \log (E_D(n) + 1)=o(D)
  \qquad(D\to\infty),
\]
uniformly in \(n\).  Substituting \(D=D_j\), and using
\[
  D_j\sim A\log(\Pal(n_j)+1),
\]
we get
\[
  \log E_{D_j}(n_j)
  =o(\log(\Pal(n_j)+1)).
\]
Thus
\[
  E_{D_j}(n_j)=\Pal(n_j)^{o(1)}=o(\Pal(n_j)),
\]
because \(\Pal(n_j)\to\infty\).  Applying Eq.~\eqref{eq1} with
\(n=n_j\) and \(D=D_j\), we obtain
\[
  \rho(n_j)
  \ge (D_j+1)\Pal(n_j)-D_jE_{D_j}(n_j)
  = (1-o(1))D_j\Pal(n_j).
\]
Since \(D_j\sim A\log(\Pal(n_j)+1)\), this gives
\[
  \rho(n_j)
  \ge
  (1-o(1))A \Pal (n_j)\log(\Pal (n_j)+1).
\]
Equivalently,
\[
  \limsup_{j\to\infty}
  \frac{\Pal(n_j)\log(\Pal(n_j)+1)}{\rho(n_j)}
  \le \frac{1}{A}.
\]
Because \(A>0\) was arbitrary, the right-hand side can be made smaller than \(\varepsilon\), contradicting \(R(n_j)\ge \varepsilon\).  Therefore \(R(n)\to 0\), as claimed.
\end{proof}

\section{Optimality}
\label{optsec}

It is natural to wonder if the $\Pal(n) \log(\Pal(n))$ in the numerator
of the quantity in
Theorem~\ref{main} is optimal.  We show that it (essentially) is.

\begin{theorem}
Let $f:\Enn\to[0,\infty)$ be a nondecreasing function.  The following are
equivalent.
\begin{itemize}
\item[(i)] For every non-ultimately periodic infinite word $\bf x$ over a finite
      alphabet,
\[
   \frac{f(\Pal(n))}{\rho(n)}\to 0.
\]
\item[(ii)] $f(t)=O(t\log(t+1))$.
\end{itemize}
Moreover, if $f(t)/(t\log(t+1))$ is unbounded, then there exists a
non-ultimately periodic infinite word $\bf x$ for which
\[
   \limsup_{n\to\infty}
   \frac{f(\Pal(n))}{\rho(n)}=+\infty.
\]
\label{optimal}
\end{theorem}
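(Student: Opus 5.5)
The equivalence splits into two directions. The implication (ii)$\Rightarrow$(i) follows from Theorem~\ref{main}; the implication (i)$\Rightarrow$(ii) follows from the ``moreover'' clause, which is where the real work lies.

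For (ii)$\Rightarrow$(i), fix $C,t_0$ with $f(t)\le C\,t\log(t+1)$ for $t\ge t_0$. For $n$ with $\Pal(n)\ge t_0$, Theorem~\ref{main} gives $f(\Pal(n))/\rho(n)\le C\,R(n)\to 0$. For $n$ with $\Pal(n)<t_0$, monotonicity gives $f(\Pal(n))\le f(t_0)$. Since $\rho(n)\to\infty$ by Theorem~\ref{thm1}, this part also tends to $0$. For (i)$\Rightarrow$(ii) I argue by contraposition. If $f(t)/(t\log(t+1))$ is unbounded, the word built below has $\limsup f(\Pal(n))/\rho(n)=+\infty$, which violates (i).

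The construction will be a staged binary word ${\bf x}=B_1B_2B_3\cdots$. Stage $j$ has parameters $t_j$, $k_j=\lceil\log_2 t_j\rceil$ and a ``blow-up'' factor $r_j$; write $\phi_r$ for the morphism $a\mapsto a^r$. I pick a set $Y_j$ of $t_j$ distinct binary words of length $k_j$, each beginning and ending with $1$. Stage $j$ is the concatenation over $y\in Y_j$ of
\[
0^{N_j}\,\phi_{r_j}(y^R)\,1\,0^{M_j}\,1\,\phi_{r_j}(y)\,0^{N_j},
\]
with $M_j=r_jk_j$ and $N_j\gg n_j:=2r_jk_j+M_j+2$. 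The middle of each block is a palindrome $u_y$ of length $n_j$, so $\Pal(n_j)\ge t_j$. I will then count $\rho(n_j)$ in three parts.
\begin{itemize}
\item Windows inside stage $j$: a window meets at most one block, and it is determined by $y$ and by its offset relative to the block, which takes $O(n_j)$ values. Windows lying entirely in zeros contribute $O(1)$. This part is $O(t_j r_j k_j)$.
\item Windows inside earlier stages, including those straddling a stage boundary: at most $|B_1\cdots B_{j-1}|+O(n_j)$, a constant $K_j$ fixed before $t_j$ is chosen.
\item Windows inside later stages: choosing $r_{j+1}>n_j$, and hence each $r_{i}>n_j$ for $i>j$, every length-$n_j$ window sees at most two runs. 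So it is one of $O(n_j)$ words of the form $a^s b^{n_j-s}$.
\end{itemize}
Altogether I expect $\rho(n_j)\le c\,r_j\,t_j\log(t_j+1)+K_j$ for an absolute constant $c$.

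The parameters are chosen inductively. Given earlier stages, $r_j$ is fixed as $n_{j-1}+1$, which also fixes $K_j$. Since $f(t)/(t\log(t+1))$ is unbounded, I then choose $t_j$ so large that $f(t_j)\ge j\,r_j\,(c\,t_j\log(t_j+1)+K_j)$. Using $\Pal(n_j)\ge t_j$ and the monotonicity of $f$, this gives $f(\Pal(n_j))/\rho(n_j)\ge j\to\infty$. The word is not ultimately periodic because the runs of zeros grow without bound, separated by $1$s.

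The main obstacle is the careful accounting of windows in the first and third parts. Straddling windows must not pair two different $y$'s; the padding $0^{N_j}$ with $N_j>n_j$ handles this. The blown-up later stages must genuinely contribute only polynomially many factors of length $n_j$, which the choice $r_{j+1}>n_j$ guarantees.
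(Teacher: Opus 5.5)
Your proposal is correct and is essentially the paper's argument. The word is built in stages, and the spreading parameter of stage $j$ is set to $n_{j-1}+1$. This makes later stages contribute only $O(n_j)$ factors of length $n_j$, earlier stages contribute at most their length, the current stage contributes $O(t_j n_j)$, and the number of palindromes is then chosen using the unboundedness of $f(t)/(t\log(t+1))$. The differences are cosmetic: you use a binary word with the run-length morphism $\phi_r$ where the paper uses zero-padded $\{1,2\}$-codes with delimiter $3$, and you choose $t_j$ directly rather than passing to $f(2^r)/(2^r r)$.

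Two small slips need fixing.
\begin{itemize}
\item Only $2^{k-2}$ binary words of length $k$ begin and end with $1$, so take $k_j=\lceil\log_2 t_j\rceil+2$.
\item Drop the $O(n_j)$ from $K_j$, because $n_j$ depends on $t_j$ and so is not fixed before $t_j$ is chosen. Windows starting in earlier stages are already counted by $|B_1\cdots B_{j-1}|$, and any remaining $O(n_j)$ is absorbed into $c\,r_j t_j\log(t_j+1)$.
\end{itemize}
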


\begin{proof}
The implication (ii) $\Rightarrow$ (i) follows immediately from
Theorem~\ref{main}.  
Indeed, by Theorem~\ref{main},
\[
   \frac{\Palx(n)\log(\Palx(n)+1)}{\rhox(n)}\to 0.
\]
Thus the assertion follows for those $n$ with large $\Palx(n)$; for the
remaining $n$, the numerator is bounded while $\rhox(n)\to\infty$.

We now prove the converse (i) $\Rightarrow$ (ii) in a strong form, by
constructing a word $\bf x$ with many palindromes.
Assume that
\[
   \frac{f(t)}{t\log(t+1)}
\]
is unbounded.  We first record a simple consequence.  Since $f$ is
nondecreasing, the sequence
$   \frac{f(2^r)}{2^r r}$
is also unbounded.  To see this, choose an increasing sequence
integers $t_j$ for which
$f(t_j)/(t_j\log(t_j+1))\to\infty$, and put
$r_j=\lceil \log_2 t_j\rceil$.  Then $t_j\leq 2^{r_j}\leq 2t_j$, and
$$r_j=O(\log(t_j+1)).$$
Therefore
\[
   \frac{f(2^{r_j})}{2^{r_j}r_j}
   \geq
   c\,\frac{f(t_j)}{t_j\log(t_j+1)}
\]
for sufficiently large $j$ and some absolute constant $c>0$,
and hence the right-hand side is unbounded.

We now construct a word $\bf x$ over the alphabet
   $\Sigma=\{0,1,2,3\}$
for which
$$   \limsup_{n\to\infty}
   \frac{f(\Palx(n))}{\rho_{\bf x}(n)}=+\infty.$$

For integers $s,r\geq 1$ and a word
$\varepsilon=\varepsilon_0 \cdots \varepsilon_{r-1}\in\{1,2\}^r$, define
\[
\begin{aligned}
   p(s,r,\varepsilon)
   ={}&0^s\varepsilon_0 0^s\varepsilon_1\cdots 0^s\varepsilon_{r-1} 0^s
      \varepsilon_{r-1} 0^s\varepsilon_{r-2}\cdots 0^s\varepsilon_0 0^s.
\end{aligned}
\]
This word is a palindrome.  It has $2r$ nonzero letters and $2r+1$ blocks
of the form $0^s$, so its total length is
$ n(s,r)=(2r+1)s+2r$.
For fixed $s$ and $r$, the $2^r$ words $p(s,r,\varepsilon)$ are
clearly all distinct as
$\varepsilon$ ranges over $\{1,2\}^r$.

We choose parameters recursively.  Let $s_1=1$ and $S_0=0$.  Suppose that
$s_k$ and $S_{k-1}$ have already been chosen.  Since
$f(2^r)/(2^r r)$ is unbounded, we may choose $r_k$ so large that, with
$M_k=2^{r_k}$, both inequalities
\begin{equation}
   M_k\geq S_{k-1}+1
\label{ineq1}
\end{equation}
and
\begin{equation}
   \frac{f(M_k)}{M_k r_k}
   \geq 21k(s_k+1)
\label{ineq2}
\end{equation}
hold.  Set
\[
   n_k=n(s_k,r_k)=(2r_k+1)s_k+2r_k.
\]
Let $B_k$ be the concatenation, in any order, of all the $M_k$ palindromes
\[
   p(s_k,r_k,\varepsilon),\qquad \varepsilon\in\{1,2\}^{r_k},
\]
each followed by the delimiter $3$.  Thus
$ |B_k|=M_k(n_k+1)$.
Define
\[
   S_k=S_{k-1}+|B_k|,
   \qquad
   s_{k+1}=n_k+1, 
\]
and set
\[
   {\bf x} =B_1B_2B_3\cdots.
\]
Note that $|B_1 B_2 \cdots B_k| = S_k$.

The word $\bf x$ is not ultimately periodic.  Indeed, $s_k\to\infty$, so
$\bf x$ contains arbitrarily long blocks of $0$'s.  If $\bf x$ were ultimately
periodic with period $T$ from some point onward, then a sufficiently late
block $0^T$ would force the periodic tail from that block onward to be
identically $0$.  But after every such block there are still later
occurrences of the symbols $1$, $2$, and $3$, a contradiction.

Now fix $k$ and examine factors of length $n_k$.  The block $B_k$ contains
all $M_k$ palindromes $p(s_k,r_k,\varepsilon)$ of length $n_k$.  Hence
\[
   \Palx(n_k)\geq M_k.
\]

We next bound $\rho_{\bf x}(n_k)$.  A length-$n_k$ factor starts either
before $B_k$, inside $B_k$, or after $B_k$.
There are exactly $S_{k-1}$ starting positions before $B_k$, so these
contribute at most $S_{k-1}$ distinct factors.  There are $|B_k|$ starting
positions inside $B_k$, so these contribute at most
$ |B_k|=M_k(n_k+1)$ distinct factors.
It remains to count factors starting after $B_k$.  For every $m>k$ we have
\[
   s_m\geq s_{k+1}=n_k+1.
\]
Thus, in the tail $B_{k+1}B_{k+2}\cdots$, any two nonzero symbols are
separated by more than $n_k$ positions: between consecutive nonzero
symbols there is a block of at least $n_k+1$ zeros.  Therefore a factor of
length $n_k$ starting after $B_k$ contains at most one nonzero symbol.
Such a factor is either
$   0^{n_k}$ or has the form
\[
   0^j c\,0^{n_k-j-1},
   \qquad 0\leq j<n_k,
   \qquad c\in\{1,2,3\}.
\]
There are at most $3n_k + 1$ such words.  Consequently,
\[
   \rho_{\bf x}(n_k)
   \leq S_{k-1}+M_k(n_k+1)+3n_k + 1.
\]
By \eqref{ineq1} we have $S_{k-1}\leq M_k$.  Since $M_k,n_k\geq 1$, we get
\[
   S_{k-1}+M_k(n_k+1)+3n_k+1
   \leq 7M_kn_k.
\]
Also we have
\[
   n_k=(2r_k+1)s_k+2r_k\leq 3r_k(s_k+1),
\]
and thus
\[
   \rho_{\bf x}(n_k)
   \leq 21M_kr_k(s_k+1).
\]
Using the fact that $f$ is nondecreasing, \eqref{ineq2},
and the lower bound
$\Palx(n_k)\geq M_k$, we obtain
\begin{displaymath}
   \frac{f(\Palx(n_k))}{\rho_{\bf x}(n_k)}
   \geq
   \frac{f(M_k)}{21M_kr_k(s_k+1)} \geq k.
\end{displaymath}
Therefore
\[
   \limsup_{n\to\infty}
   \frac{f(\Palx(n))}{\rho_{\bf x}(n)}=+\infty.
\]
This completes the proof.
\end{proof}

\begin{corollary}
For every unbounded nondecreasing function $g$, there is a
non-ultimately periodic infinite word $\bf x$ such that
\[
   \limsup_{n\to\infty}
   \frac{\Palx(n)\log(\Palx(n)+1)
          g(\Palx(n))}{\rho_{\bf x}(n)}=+\infty.
\]
\end{corollary}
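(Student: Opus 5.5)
The corollary is a direct application of the ``moreover'' part of Theorem~\ref{optimal}, applied to a suitable $f$. Given an unbounded nondecreasing $g$, I will define
\[
   f(t) = t\log(t+1)\,\max(g(t),0) \qquad (t\in\Enn).
\]
Then $f:\Enn\to[0,\infty)$, and $f$ is nondecreasing. Indeed, $t\log(t+1)$ is nonnegative and nondecreasing, $\max(g,0)$ is nonnegative and nondecreasing, and a product of nonnegative nondecreasing functions is nondecreasing. I use $\max(g,0)$ only to guarantee nonnegativity, since $g$ might take negative values at small arguments. Because $g$ is unbounded and nondecreasing, $g(t)\to+\infty$, so $g(t)>0$ for all $t\ge t_0$.

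Next I check that $f(t)/(t\log(t+1)) = \max(g(t),0)$ is unbounded for $t\ge1$. This holds because $g$ is unbounded above. Theorem~\ref{optimal} then produces a non-ultimately periodic word $\bf x$ with
\[
   \limsup_{n\to\infty} \frac{f(\Palx(n))}{\rhox(n)} = +\infty .
\]

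It remains to transfer this to the expression in the corollary. Along a sequence $n_k$ on which $f(\Palx(n_k))/\rhox(n_k)\to\infty$, we need $f(\Palx(n_k))>0$, which forces $\Palx(n_k)\ge 1$ and $g(\Palx(n_k))>0$. For those $n_k$,
\[
   f(\Palx(n_k)) = \Palx(n_k)\log(\Palx(n_k)+1)\,g(\Palx(n_k)).
\]
So the corollary's ratio coincides with $f(\Palx(n_k))/\rhox(n_k)$ along this sequence, and its $\limsup$ is $+\infty$.

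There is no real obstacle here. The only point needing care is the sign issue for $g$, which the truncation $\max(g,0)$ handles; one could equally note that the construction in Theorem~\ref{optimal} gives $\Palx(n_k)\ge M_k\to\infty$, so that $g(\Palx(n_k))>0$ eventually.
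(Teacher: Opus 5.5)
Your proposal is correct and follows the paper's proof, which simply applies Theorem~\ref{optimal} to $f(t)=t\log(t+1)g(t)$. Your truncation to $\max(g(t),0)$, together with the check that the two ratios agree along the witnessing subsequence, adds care the paper omits: it guarantees that $f$ maps into $[0,\infty)$ and is nondecreasing even when $g$ takes negative values.
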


\begin{proof}
Apply Theorem~\ref{optimal} to
\[
   f(t)=t\log(t+1)g(t).
\]
\end{proof}

In 2001, according to \cite[Remark 9, p.~27]{Allouche&Baake&Cassaigne&Damanik:2003}, Christian Choffrut asked if it is always the case that
$\Palx (n) = O(\sqrt{\rhox(n)})$.  Theorem~\ref{optimal} shows that the answer 
is no.

\section{Acknowledgments}

Much of this paper was written by ChatGPT 5.5 Pro.  All arguments were
checked and rewritten by the author, who takes full responsibility
for correctness.  We thank Jean-Paul Allouche and Edita Pelantov\'a
for their comments.

\end{document}